\DeclareMathOperator{\id}{id}
\DeclareMathOperator{\Spec}{Spec}
\DeclareMathOperator{\rank}{rank}
\DeclareMathOperator{\CH}{CH}
\DeclareMathOperator{\mult}{mult}
\DeclareMathOperator{\CK}{CK}
\DeclareMathOperator{\pgcd}{gcd}
\newcommand{\Tan}{T}
\newcommand{\Zz}{\mathbb{Z}}
\newcommand{\G}{K_0'}
\newcommand{\K}{K_0}
\newcommand{\Oc}{\mathcal{O}}
\newcommand{\Ec}{\mathcal{E}}
\newtheorem{theorem}{Theorem}[section]
\newaliascnt{proposition}{theorem}
\newtheorem{proposition}[proposition]{Proposition}
\newaliascnt{lemma}{theorem}
\newtheorem{lemma}[lemma]{Lemma}
\newaliascnt{corollary}{theorem}
\newtheorem{corollary}[corollary]{Corollary}
\theoremstyle{definition}
\newaliascnt{remark}{theorem}
\newaliascnt{example}{theorem}
\newaliascnt{definition}{theorem}
\newtheorem{definition}[definition]{Definition}
\begin{document}
\begin{abstract}
We give a proof of the degree formula for the Euler characteristic previously obtained by Kirill Zainoulline. The arguments used here are considerably simpler, and allow us to remove all restrictions on the characteristic of the base field.
\end{abstract}

\author{Olivier Haution}
\title{Degree formula for the Euler characteristic}
\email{olivier.haution at gmail.com}
\address{School of Mathematical Sciences\\
University of Nottingham\\
University Park\\
Nottingham\\
NG7 2RD\\ 
United Kingdom}
\date{\today}
\setcounter{tocdepth}{1}
\keywords{Grothendieck group, Euler characteristic, degree formula}
\subjclass[2010]{14C40, 14F43}
\maketitle

\section*{Introduction}
The degree formula for the Euler characteristic says that if $f \colon Y \dashrightarrow X$ is a rational map, with $X$ and $Y$ projective connected smooth varieties of the same dimension $d$, then $X$ possesses a zero-cycle of degree 
\[
\tau_{d-1} \cdot(\chi(\Oc_Y)- \deg f \cdot \chi(\Oc_X)).
\]
Here $\deg f$ is the degree of the function fields extension (zero when $f$ is not dominant), $\chi$ the Euler characteristic, and $\tau_{d-1}$ the $d-1$-st Todd number (the denominator in the $d-1$-st Todd class, see \eqref{Todd}). This formula is useful to prove incompressibility properties of some varieties.\\

In the paper \cite{Zai-09}, where this formula is introduced, two distinct proofs are given, based on different results: 
\begin{enumerate}[(a)]
\item \label{gen} the generalized degree formula for algebraic cobordism,
\item \label{rost} or the Rost's degree formula.
\end{enumerate}
Both techniques are quite sophisticated, and moreover require to make some assumptions on the characteristic of the base field. It is known that \eqref{gen} (together with the universal property of algebraic cobordism) implies \eqref{rost}, but \eqref{rost} has the advantage of being known for some fields of positive characteristic, while \eqref{gen} requires to work over a field of characteristic zero.

Indeed \eqref{rost} is not proved at the moment when no information on the characteristic of the base field is available, even under the assumption of resolution of singularities. In \cite[Section~8]{reduced} we showed that the $p$-primary part of the result of \cite{Zai-09} can be obtained in arbitrary characteristic when one disposes of the so-called $p$-resolution of singularities. This suggests that this result, the degree formula for the Euler characteristic, does not lie at the same depth as the classical degree formula \eqref{rost}.\\

The purpose of this article is to give a simpler proof of this result, over any field. In contrast to \eqref{gen} or \eqref{rost}, the main ingredients here are the Grothendieck-Riemann-Roch theorem, and a small fraction of \cite{Mer-Ori}.\\

{\bf Acknowledgements.} The support of EPSRC Responsive Mode grant EP/G032556/1 is gratefully acknowledged. I thank Alexander Vishik for the useful discussions that we had on the subject of this paper.

\section{Notations}
\subsection{Varieties} We fix a base field $k$. A \emph{variety} is a finite type, separated, quasi-projective scheme over $k$. A morphism of varieties is a morphism of schemes over $k$. When $X$ is a smooth variety, we denote by $\Tan_X$ its tangent bundle.

\subsection{Grothendieck groups of schemes} Let $X$ be a noetherian scheme. We denote by $\G(X)$ (resp. $\K(X)$) the Grothendieck group (resp. ring) of coherent $\Oc_X$-modules (resp. locally-free $\Oc_X$-modules). 

If $f \colon Y \to X$ is a flat morphism of noetherian schemes, then it induces a pull-back $f^* \colon \G(X) \to \G(Y)$. 

If $f \colon Y \to X$ is a projective morphism of varieties, then it induces a push-forward $f_* \colon \G(Y) \to \G(X)$.

\subsection{Poincare homomorphism}
There is a natural map 
\[
\K(X) \to \G(X)\; \colon \; x \mapsto x \cap [\Oc_X]
\]
which is an isomorphism when $X$ is regular (i.e. for every point $x$ of $X$ the local ring $\Oc_{X,x}$ is regular).

\subsection{Rank homomorphism} When $X$ is connected, there is a ring homomorphism, sending a vector bundle to its rank
\[
\rank \colon \K(X) \to \Zz.
\]

\subsection{First Chern class} Let $E$ be a vector bundle over a connected, noetherian scheme $X$. We denote its first Chern class by
\[
c_1(E)=\rank E- [E^\vee] \in \K(X).
\]

\subsection{Subgroup of generically trivial classes} Let $X$ be an integral variety. We denote by $\G(X)^{(1)}$ the subgroup of $\G(X)$ generated by the elements $i_*[\Oc_W]$ with $i \colon W \hookrightarrow X$ a non-dominant closed embedding of varieties. We have an exact sequence
\begin{equation}
\label{eq:exact}
0 \to \G(X)^{(1)} \to \G(X) \xrightarrow{\eta^*} \G(\Spec(k(X))) \to 0,
\end{equation}
where $\eta \colon \Spec(k(X)) \to X$ is the generic point.

\subsection{Euler characteristic} Let $X$ be a projective variety, and $x \colon X \to \Spec(k)$ its structural morphism. The \emph{Euler characteristic} of a coherent $\Oc_X$-module $\Ec$ is
\[
\chi(\Ec)=\sum_{i\geq 0} (-1)^i \dim_k H^i(X,\Ec)=x_*[\Ec].
\]
For the last equality we have used the identification $\G(\Spec(k))=\Zz$.\\

\section{Degree formula for $K$-theory}
\label{sect:deg}
\begin{lemma}
\label{lemm:fisrtc}
Let $V$ be a vector bundle over a smooth connected variety $X$. Then there exists smooth varieties $Z_i$ of dimension $\dim X-1$, projective morphisms $f_i \colon Z_i \to X$, and integers $n_i$, such that we have in $\G(X)$
\[
c_1(V)\cap[\Oc_X]=\sum_i n_i \cdot (f_i)_*[\Oc_{Z_i}].
\]
\end{lemma}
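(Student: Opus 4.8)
The plan is to reduce the statement to the case of a line bundle, which I would then settle by a Bertini argument and an induction on $\dim X$. Since $X$ is smooth we have $\K(X)=\G(X)$, so $c_1(V)\cap[\Oc_X]$ may be viewed in $\G(X)$, and its image under $\eta^*$ is $\rank V-\rank V^\vee=0$; hence by~\eqref{eq:exact} it lies in $\G(X)^{(1)}$. That subgroup is generated by the classes $[\Oc_W]$ with $W\subset X$ closed integral of dimension $\le\dim X-1$ (the general closed non‑dominant case reducing to this by dévissage), so it suffices to express each such $[\Oc_W]$ as a $\Zz$-combination of classes $(f_i)_*[\Oc_{Z_i}]$ with $Z_i$ smooth and $f_i$ projective; moreover the dimension of $Z_i$ may be prescribed to be exactly $\dim X-1$ (or freely anything $\le\dim X-1$), since a smooth $Z$ of smaller dimension can be replaced by $Z\times\mathbb{P}^n$, using $Rp_*\Oc_{Z\times\mathbb{P}^n}=\Oc_Z$ for the projection $p$.

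If $\dim W=\dim X-1$ then $W$ is a Cartier divisor, $X$ being locally factorial, and the inclusion of its invertible ideal sheaf gives $0\to\Oc_X(-W)\to\Oc_X\to\Oc_W\to 0$, whence $[\Oc_W]=c_1(\Oc_X(W))\cap[\Oc_X]$. This reduces everything to the case $V=L$ a line bundle, which I would treat by induction on $\dim X$ (trivial when $\dim X\le 0$, for then $c_1(L)=0$). Fixing an ample line bundle $\mathcal{A}$ on $X$, for $m\gg 0$ both $L\otimes\mathcal{A}^{\otimes m}$ and $\mathcal{A}^{\otimes m}$ are very ample, so a Bertini-type theorem provides smooth divisors $D_+\in|L\otimes\mathcal{A}^{\otimes m}|$ and $D_-\in|\mathcal{A}^{\otimes m}|$ (when $k$ is finite one appeals to Poonen's version of Bertini), and then $L\cong\Oc_X(D_+-D_-)$.

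From the exact sequences $0\to\Oc_X(-D_\pm)\to\Oc_X\to\Oc_{D_\pm}\to 0$ one gets, in the ring $\K(X)=\G(X)$,
\[
c_1(L)\cap[\Oc_X]=1-[L^\vee]=1-(1-[\Oc_{D_-}])^{-1}(1-[\Oc_{D_+}]).
\]
Since $[\Oc_{D_\pm}]$ are nilpotent, expanding the right-hand side displays $c_1(L)\cap[\Oc_X]$ as a $\Zz$-combination of $[\Oc_{D_+}]$, $[\Oc_{D_-}]$, and products $[\Oc_{D_+}]^a[\Oc_{D_-}]^b$ with $a+b\ge 2$. The first two terms are already of the desired form. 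A product term with, say, $a\ge 1$ equals, by the projection formula along $i_{D_+}\colon D_+\hookrightarrow X$ and the identity $i_{D_+}^*[\Oc_{D_\pm}]=c_1(\Oc_X(D_\pm)|_{D_+})$, the pushforward $(i_{D_+})_*$ of a polynomial in first Chern classes of line bundles on $D_+$ capped with $[\Oc_{D_+}]$; applying the induction hypothesis to the smooth variety $D_+$ (of dimension $\dim X-1$) peels off one Chern class, and iterating on the smaller-dimensional smooth varieties thus produced — an inner induction on the degree of the polynomial, which terminates because capping a positive power of a first Chern class with the class of a zero-dimensional variety vanishes — finishes this term. This settles the line bundle case.

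There remains the case $\dim W\le\dim X-2$, where $W$ is in general singular; this is the only step requiring more than elementary manipulations. Here I would invoke the fact, contained in \cite{Mer-Ori}, that $\G(W)$ is generated by the classes $g_*[\Oc_Z]$ with $Z$ smooth and $g\colon Z\to W$ projective; composing with the inclusion $W\hookrightarrow X$ and adjusting dimensions as above concludes the proof. (In characteristic zero, or for $\dim W\le 2$, one may instead argue directly by normalization and resolution of singularities together with an induction on $\dim W$.) The main obstacle is precisely this appeal to a substitute for resolution of singularities; all the rest is bookkeeping with the projection formula and the nilpotence of classes of divisors.
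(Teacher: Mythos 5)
Your reduction is backwards relative to the logic of the paper, and this is where the genuine gap lies. By sending $c_1(V)\cap[\Oc_X]$ into $\G(X)^{(1)}$ and then trying to treat an arbitrary set of generators $[\Oc_W]$, you have converted the lemma into the stronger statement of \autoref{prop:main} --- which the paper deduces \emph{from} this lemma, precisely because the direct approach you propose runs into singular subvarieties. Concretely, in your case $\dim W\leq\dim X-2$ you invoke ``the fact, contained in \cite{Mer-Ori}, that $\G(W)$ is generated by the classes $g_*[\Oc_Z]$ with $Z$ smooth and $g$ projective'' for a possibly singular $W$. No such fact is in \cite{Mer-Ori}: that paper works with oriented theories on smooth varieties, and the generation of $\G(W)$ by push-forwards from smooth varieties is exactly the resolution-of-singularities-type input (de Jong's alterations only give it after inverting degrees) whose avoidance is the whole point of this article. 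So the one step you flag as ``the main obstacle'' is indeed an obstacle, and it is not covered by the cited reference; your argument as written only proves the lemma in characteristic zero.

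The fix is to never let go of the specific shape of the element $c_1(V)\cap[\Oc_X]$. Instead of passing to $\G(X)^{(1)}$, pull back along the flag bundle $\pi\colon\mathrm{Fl}(V)\to X$, where $V^\vee$ acquires a filtration with line-bundle quotients $L_1,\dots,L_n$; since $\pi_*[\Oc_{\mathrm{Fl}(V)}]=[\Oc_X]$, the projection formula gives $c_1(V)\cap[\Oc_X]=\sum_j\pi_*\bigl(c_1(L_j^\vee)\cap[\Oc_{\mathrm{Fl}(V)}]\bigr)$, and only the line-bundle case (on a smooth variety) remains --- your Bertini argument, the nilpotence expansion of $(1-[\Oc_{D_-}])^{-1}(1-[\Oc_{D_+}])$, and the inner induction via the projection formula are all sound for that case, though two points still need care: over a finite field Poonen's theorem produces smooth members of $|(L\otimes\mathcal{A}^{\otimes m})^{\otimes d}|$ rather than of $|L\otimes\mathcal{A}^{\otimes m}|$, so one must recover $c_1(L)$ from the classes $1-(1-c_1(L))^d$ for large $d$ using nilpotence; and after the flag-bundle step the varieties produced have dimension $\geq\dim X-1$ rather than $\leq\dim X-1$, so your $Z\times\mathbb{P}^n$ normalization no longer applies and one must argue separately to land in dimension exactly $\dim X-1$. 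The paper sidesteps all of this bookkeeping by working in the graded theory $\K(-)[t,t^{-1}]$ and quoting \cite[Theorem~9.8]{Mer-Ori} for the statement that $c_1^K(V)$ is a combination of push-forwards of fundamental classes of smooth varieties, the grading then forcing $\dim Z_i=\dim X-1$.
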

\begin{proof}
Consider the theory $K=\K(-)[t,t^{-1}]$ of \cite[Example 2.3]{Mer-Ori}. When $f \colon Y \to X$ is a projective morphism of smooth connected varieties, then the push-forward $f^K_*\colon K(Y) \to K(X)$ along $f$ is given by the formula, for $x \in \K(Y)$,
\begin{equation}
\label{eq:e}
f^K_*(x\cdot t^k)=f_*(x)\cdot  t^{k+\dim Y-\dim X}.
\end{equation}
Here $f_*(x)\in \K(X)$ is the element defined by $f_*(x) \cap [\Oc_X]=f_*(x\cap [\Oc_Y]) \in \G(X)$.

We have in $K(X)$ (see \cite[Example 3.1]{Mer-Ori})
\begin{equation}
\label{d}
c_1^K(V)=(\rank V  - [V^\vee])\cdot t^{-1}.
\end{equation}
On the other hand, we have, by \cite[Theorem 9.8]{Mer-Ori},
\begin{equation}
\label{c}
c_1^K(V)=\sum_i n_i \cdot (f_i)^K_*(1_{Z_i}).
\end{equation}
with $n_i, f_i, Z_i$ as requested. Note that this element belongs to $K^1(X)=\K(X)\cdot t^{-1}$, therefore in view of \eqref{eq:e} we can choose the varieties $Z_i$ so that $\dim Z_i=\dim X -1$.

We obtain the result by applying to \eqref{d} and \eqref{c} the composite
\[
K(X)=\K(X)[t,t^{-1}] \to \K(X)\cdot t^{-1}=\K(X) \to \G(X). \qedhere
\]
\end{proof}

\begin{proposition}
\label{prop:main}
Let $X$ be a smooth connected variety. Then  $\G(X)^{(1)}$ is additively generated by elements of type $f_*[\Oc_Z]$ with $Z$ a smooth variety such that $\dim Z = \dim X-1$, and $f \colon Z \to X$ a projective morphism.
\end{proposition}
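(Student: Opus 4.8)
The plan is to deduce the statement from \autoref{lemm:fisrtc} by identifying $\G(X)^{(1)}$ with the kernel of the rank homomorphism. Since $X$ is smooth it is regular, so the Poincar\'e homomorphism $\K(X) \to \G(X)$, $v \mapsto v \cap [\Oc_X]$, is an isomorphism; similarly $\K(\Spec k(X)) \to \G(\Spec k(X))$ is an isomorphism, and $\K(\Spec k(X)) = \Zz$ via the rank. The flat pull-back $\eta^* \colon \G(X) \to \G(\Spec k(X))$ corresponds, under these isomorphisms, to restriction of vector bundles to the generic point of $X$, that is, to $\rank \colon \K(X) \to \Zz$; this compatibility of the Poincar\'e homomorphism with flat pull-back between regular schemes is standard. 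In view of \eqref{eq:exact}, the isomorphism $\K(X) \cong \G(X)$ therefore restricts to an isomorphism $\ker(\rank \colon \K(X) \to \Zz) \xrightarrow{\ \sim\ } \G(X)^{(1)}$.

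Next I would check that $\ker(\rank)$ is generated, as an abelian group, by the first Chern classes $c_1(V) = \rank V - [V^\vee]$ of vector bundles $V$ on $X$. Indeed, an arbitrary $\xi \in \ker(\rank)$ can be written $\xi = \sum_i a_i [E_i]$ with $a_i \in \Zz$ and $E_i$ vector bundles; then $\sum_i a_i \rank E_i = 0$, so $\xi = \sum_i a_i ([E_i] - (\rank E_i)\,[\Oc_X]) = -\sum_i a_i\, c_1(E_i^\vee)$, and each $E_i^\vee$ is again a vector bundle. Hence $\G(X)^{(1)}$ is additively generated by the elements $c_1(V) \cap [\Oc_X]$ of $\G(X)$, with $V$ ranging over the vector bundles on $X$.

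Finally, I would apply \autoref{lemm:fisrtc} to each such $V$: it gives $c_1(V) \cap [\Oc_X] = \sum_i n_i (f_i)_*[\Oc_{Z_i}]$ with the $Z_i$ smooth of dimension $\dim X - 1$ and the $f_i \colon Z_i \to X$ projective, producing the required generating set for $\G(X)^{(1)}$. I do not expect a genuine obstacle: the argument is formal once \autoref{lemm:fisrtc} is available, the only step needing a little care being the identification of $\eta^*$ with the rank homomorphism through the Poincar\'e isomorphism.
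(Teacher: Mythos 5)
Your proof is correct and takes essentially the same route as the paper's: identify $\G(X)^{(1)}$ with the kernel of the rank homomorphism via the Poincar\'e isomorphism, express an element of that kernel in terms of first Chern classes of vector bundles, and conclude by \autoref{lemm:fisrtc}. The only cosmetic difference is that the paper writes a kernel element as $[E]-[F]$ with $\rank E=\rank F=n$ and rewrites it as $c_1(F^\vee)-c_1(E^\vee)$, whereas you use a general integer combination $\sum_i a_i[E_i]$; both reductions are valid.
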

\begin{proof}
Since $X$ is a regular variety, the map $\K(X) \to \G(X)$ is an isomorphism. Under this identification, the subgroup $\G(X)^{(1)}$ corresponds to the kernel of the rank homomorphism. Any element $x$ of this kernel can be written as $[E] - [F]$, for some vector bundles $E$ and $F$ on $X$, having the same rank $n$. Then
\[
x=(n-[F])-(n-[E])=c_1(F^\vee) - c_1(E^\vee). 
\]
Finally we apply \autoref{lemm:fisrtc} above.
\end{proof}

\begin{definition}
\label{def:deg}
Let $f \colon Y \to X$ be a projective morphism of varieties, with $X$ integral. Consider the generic fiber $Y \times_X k(X)$ as a variety over $k(X)$, and define an integer 
\[
\deg{f}=\chi(\Oc_{Y \times_X k(X)}),
\]
being understood that $\chi(\emptyset)=0$. 
\end{definition}

\begin{lemma}
\label{lemm:deg}
Let $f \colon Y \to X$ be a projective morphism, with $X$ integral. Then
\[
f_*[\Oc_Y]-\deg f \cdot [\Oc_X] \in \G(X)^{(1)}.
\]
\end{lemma}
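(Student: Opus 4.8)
The plan is to test membership in $\G(X)^{(1)}$ by means of the exact sequence \eqref{eq:exact}: it is enough to show that the class $f_*[\Oc_Y]-\deg f\cdot[\Oc_X]$ is annihilated by the flat pull-back $\eta^*\colon\G(X)\to\G(\Spec k(X))=\Zz$ along the generic point. Since $\eta^*\Oc_X=\Oc_{\Spec k(X)}$, the term $\eta^*[\Oc_X]$ equals $1$ under the identification $\G(\Spec k(X))=\Zz$, so the whole statement reduces to proving that $\eta^* f_*[\Oc_Y]=\deg f$.

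For this I would appeal to the compatibility of projective push-forward with flat pull-back (flat base change for coherent sheaves). Write $Y'=Y\times_X\Spec k(X)$ for the generic fibre, $g\colon Y'\to Y$ for the projection (it is flat, being a base change of the flat morphism $\eta$), and $f'\colon Y'\to\Spec k(X)$ for the structural morphism (it is projective, since $f$ is, so that $Y'$ is a variety over $k(X)$, or empty). Base change then gives
\[
\eta^* f_*[\Oc_Y]=f'_*g^*[\Oc_Y]=f'_*[\Oc_{Y'}]=\chi(\Oc_{Y'}),
\]
and the right-hand side is precisely $\deg f$ by \autoref{def:deg}. Combining with the previous paragraph, $\eta^*\!\left(f_*[\Oc_Y]-\deg f\cdot[\Oc_X]\right)=\deg f-\deg f=0$, as desired. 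The empty-fibre case is covered too: then $\deg f=0$ by the convention $\chi(\emptyset)=0$, while $g^*[\Oc_Y]\in\G(Y')=\G(\emptyset)=0$, so $\eta^* f_*[\Oc_Y]=0$ as well.

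The one ingredient that is not a formal consequence of the definitions recalled in Section~1 is the flat base change identity $\eta^* f_*=f'_*g^*$; this is standard, and in the case at hand particularly transparent because $\eta$ is a localization morphism, so that push-forward of coherent sheaves commutes with passing to the generic point of the base. I expect this to be the only genuine step: everything else is bookkeeping with the exact sequence \eqref{eq:exact} and the definition of $\deg f$.
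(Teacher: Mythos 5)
Your argument is exactly the paper's: the author also reduces via the exact sequence \eqref{eq:exact} to checking that $\eta^*$ kills the class, and verifies this by the commutativity of the flat base change square $\eta^* f_* = f'_* g^*$, identifying $\eta^* f_*[\Oc_Y]$ with $\chi(\Oc_{Y\times_X k(X)})=\deg f$. Your write-up just makes explicit (including the empty-fibre case) what the paper compresses into a single commutative diagram.
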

\begin{proof}
This follows from the sequence \eqref{eq:exact}, and from the commutative diagram
\[ \xymatrix{
\G(Y)\ar[r] \ar[d]_{f_*} & \G(\Spec(Y_{k(X)}))  \ar[rd]^{\chi}& \\ 
\G(X) \ar[r] & \G(\Spec(k(X))) \ar@{=}[r]& \Zz 
}\]
\end{proof}

\begin{theorem}
\label{th:main}
Let $f \colon Y \to X$ be a projective morphism, with $X$ a smooth connected variety. Then we have in $\G(X)$
\[
f_*[\Oc_Y]=\deg f \cdot [\Oc_X] + \sum_i n_i \cdot (f_i)_*[\Oc_{Z_i}],
\]
for some smooth varieties $Z_i$ of dimension $\dim X -1$, projective morphisms $f_i \colon Z_i \to X$, and integers $n_i$.
\end{theorem}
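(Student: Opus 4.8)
The plan is simply to assemble the two preceding results, \autoref{lemm:deg} and \autoref{prop:main}, since between them they already contain all the content. The only preliminary observation needed is that a smooth connected variety $X$ is integral: smoothness makes $X$ regular, and a connected regular scheme is irreducible and reduced, hence integral. This is what lets us invoke \autoref{lemm:deg} with the $X$ of the present statement.

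So the first step is to apply \autoref{lemm:deg} to $f \colon Y \to X$, obtaining
\[
f_*[\Oc_Y]-\deg f \cdot [\Oc_X] \in \G(X)^{(1)}.
\]
The second step is to feed this element into \autoref{prop:main}: since $X$ is smooth and connected, that proposition tells us the subgroup $\G(X)^{(1)}$ is additively generated by the classes $(f_i)_*[\Oc_{Z_i}]$ with $Z_i$ smooth of dimension $\dim X-1$ and $f_i \colon Z_i \to X$ projective. Hence we may write $f_*[\Oc_Y]-\deg f \cdot [\Oc_X] = \sum_i n_i \cdot (f_i)_*[\Oc_{Z_i}]$ for suitable integers $n_i$ and data $Z_i$, $f_i$ of the required type. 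Transposing the term $\deg f \cdot [\Oc_X]$ to the right-hand side yields exactly the asserted identity in $\G(X)$.

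There is no genuine obstacle at this stage: the substantive work has already been done, first in \autoref{lemm:fisrtc} (where the oriented cohomology theory $\K(-)[t,t^{-1}]$ of Merkurjev--Orlov and their presentation results are used to realise a first Chern class as a combination of push-forwards from smooth divisorial-codimension sources), then in \autoref{prop:main} (reducing a rank-zero $K$-theory class to a difference of first Chern classes via the Poincaré isomorphism), and finally in \autoref{lemm:deg} (identifying the generic-fibre Euler characteristic with $\deg f$ through the exact sequence \eqref{eq:exact}). The present theorem is the formal consequence of stringing these together, so the proof is a short one.
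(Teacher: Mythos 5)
Your proof is correct and is exactly the paper's argument: the theorem is obtained by combining \autoref{lemm:deg} with \autoref{prop:main}, and your preliminary remark that a smooth connected variety is integral (so that \autoref{lemm:deg} applies) is a worthwhile detail the paper leaves implicit.
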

\begin{proof}
This results from the combination of \autoref{lemm:deg} and \autoref{prop:main}. 
\end{proof}

\section{The Euler characteristic}
\label{sect:euler}
Given a rational number $\alpha$, let us denote by $[\alpha]$ the greatest integer $\leq \alpha$. If $d \geq 0$ is an integer, the $d$-th \emph{Todd number} is
\begin{equation}
\label{Todd}
\tau_d=\prod_{p \text{ prime}} p^{[d/(p-1)]}.
\end{equation}
These numbers appear as denominators in the Todd class. We have $\tau_d | \tau_e$ whenever $d\leq e$.\\

The next lemma is an immediate consequence of the Grothendieck-Riemann-Roch theorem.
\begin{lemma}[{\cite[Lemma~3.6]{Zai-09}}]
\label{lemm:chi}
If $X$ is a smooth projective variety, then it possesses a zero-cycle of degree $\tau_{\dim X} \cdot \chi(\Oc_X)$.
\end{lemma}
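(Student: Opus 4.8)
The claim (Lemma, following Zainoulline's Lemma 3.6) is that a smooth projective variety $X$ possesses a zero-cycle of degree $\tau_{\dim X}\cdot \chi(\Oc_X)$.

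The plan is to read the statement off from the Grothendieck--Riemann--Roch theorem applied to the structural morphism $x \colon X \to \Spec(k)$. Set $d = \dim X$ and write $\Td(\Tan_X) = \sum_{i=0}^{d} \Td_i(\Tan_X)$ for the decomposition of the Todd class into its homogeneous components, with $\Td_i(\Tan_X) \in \CH^i(X)_\Qq$. Applying GRR to $[\Oc_X] \in \G(X)$, and using that $\ch([\Oc_X]) = 1$ and that $\Tan_{\Spec k}$ is trivial, I obtain $\chi(\Oc_X) = x_*[\Oc_X] = x_*\big(\Td(\Tan_X)\big)$ in $\CH_*(\Spec k)_\Qq = \Qq$. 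Since $x_*$ preserves dimension and $\CH_*(\Spec k)$ is concentrated in dimension $0$, only the top component of the Todd class contributes, so $\chi(\Oc_X) = \deg\big(\Td_d(\Tan_X)\big)$, the degree of a zero-cycle class with rational coefficients.

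The second ingredient --- which is precisely the arithmetic fact that the Todd numbers encode --- is that $\tau_d$ clears the denominators of the degree-$d$ part of the Todd polynomial: $\tau_d \cdot \Td_d$ is a polynomial with \emph{integer} coefficients in the Chern classes. Hence $\tau_d \cdot \Td_d(\Tan_X)$ is obtained from the (integral) Chern classes of $\Tan_X$ with integer coefficients, so it already lies in the integral group $\CH_0(X)$, i.e. it is the class of an honest zero-cycle $z$ on $X$; in particular no torsion phenomenon in $\CH_0(X)$ needs to be addressed. It then remains to note that the degree homomorphism $\CH_0(X) \to \Zz$ is compatible with its $\Qq$-linear extension, whence $\deg(z) = \tau_d \cdot \deg\big(\Td_d(\Tan_X)\big) = \tau_d \cdot \chi(\Oc_X)$, which is the assertion.

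The argument is short once GRR is invoked; the only genuine point is the integrality of $\tau_d \cdot \Td_d$ viewed as a polynomial in the Chern classes --- a combinatorial property of the Todd polynomial (the low-degree cases $\tau_1\Td_1 = c_1$, $\tau_2\Td_2 = c_1^2+c_2$, $\tau_3\Td_3 = c_1 c_2$ already illustrate it, and it is what motivates the definition of $\tau_d$ recalled just above). Everything else is bookkeeping about the degree map and about which graded pieces survive push-forward to a point; if one prefers, one may also reduce to the connected case first, using $\tau_{d'} \mid \tau_d$ for $d' \le d$ to handle components of smaller dimension.
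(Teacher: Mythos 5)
Your proof is correct and follows exactly the route the paper intends: the paper gives no written proof, stating only that the lemma ``is an immediate consequence of the Grothendieck--Riemann--Roch theorem'' and citing \cite[Lemma~3.6]{Zai-09}, and your argument is precisely that deduction spelled out (GRR applied to the structural morphism, plus the integrality of $\tau_d\cdot\Td_d$ as a polynomial in the Chern classes, which is what the definition of $\tau_d$ in \eqref{Todd} encodes).
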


If $X$ is a projective variety, we denote by $n_X$ the positive integer such that 
\[
\deg \CH(X)=n_X \cdot \Zz.
\]
This integer coincides with the greatest common divisor of the degrees of closed points of $X$.

\begin{proposition}
\label{prop:deg}
Let $f \colon Y \to X$ be a projective morphism. Assume that $X$ is smooth, projective and connected. Then we have, using \autoref{def:deg},
\[
\tau_{\dim X-1} \cdot \chi(\Oc_Y)=\deg f \cdot \tau_{\dim X-1} \cdot \chi(\Oc_X) \mod n_X.
\]
\end{proposition}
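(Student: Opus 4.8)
The plan is to feed the morphism $f$ into \autoref{th:main}, push the resulting identity down to $\Spec(k)$ to turn it into a numerical equation between Euler characteristics, and then kill the error term modulo $n_X$ using \autoref{lemm:chi}.

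First I would apply \autoref{th:main} to $f$, which yields in $\G(X)$ an equality
\[
f_*[\Oc_Y]=\deg f \cdot [\Oc_X] + \sum_i n_i \cdot (f_i)_*[\Oc_{Z_i}],
\]
with each $Z_i$ smooth of dimension $\dim X-1$ and $f_i\colon Z_i \to X$ projective. Since $X$ is projective and $f$ (resp.\ each $f_i$) is projective, the varieties $Y$ and $Z_i$ are themselves projective over $k$, so their Euler characteristics are defined. I would then push the displayed equality forward along the structural morphism $x\colon X \to \Spec(k)$, using functoriality of projective push-forward (so $x_*f_*=(xf)_*$ and $x_*(f_i)_*=(xf_i)_*$) together with the identification $\G(\Spec(k))=\Zz$ and the definition $\chi(\Oc_\bullet)=(\text{structural morphism})_*[\Oc_\bullet]$. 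This produces the purely numerical identity
\[
\chi(\Oc_Y)=\deg f \cdot \chi(\Oc_X)+\sum_i n_i \cdot \chi(\Oc_{Z_i}).
\]

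Next I would multiply through by $\tau_{\dim X-1}$ and deal with the sum on the right. By \autoref{lemm:chi}, each $Z_i$ carries a zero-cycle of degree $\tau_{\dim Z_i}\cdot \chi(\Oc_{Z_i})=\tau_{\dim X-1}\cdot \chi(\Oc_{Z_i})$. Pushing this zero-cycle forward along the projective morphism $f_i$ gives a zero-cycle on $X$ of the same degree, since the degree map on $\CH_0$ factors through proper push-forward; hence $\tau_{\dim X-1}\cdot\chi(\Oc_{Z_i})\in \deg\CH(X)=n_X\Zz$ for every $i$. Therefore $\sum_i n_i\cdot\tau_{\dim X-1}\chi(\Oc_{Z_i})\equiv 0 \mod n_X$, and multiplying the numerical identity above by $\tau_{\dim X-1}$ leaves exactly
\[
\tau_{\dim X-1}\cdot\chi(\Oc_Y)=\deg f \cdot \tau_{\dim X-1}\cdot\chi(\Oc_X) \mod n_X,
\]
as required.

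The proof is essentially bookkeeping once \autoref{th:main} is in hand: the only places demanding a little care are checking that $Y$ and the $Z_i$ really are projective over $k$ (so that passing from the $\G(X)$-identity to the Euler-characteristic identity is legitimate), and the compatibility of the degree homomorphism with the push-forwards $f_i$; neither is a genuine obstacle. All the real content sits in \autoref{th:main} (and, through it, in the cited input from \cite{Mer-Ori}) and in the Grothendieck--Riemann--Roch statement \autoref{lemm:chi}.
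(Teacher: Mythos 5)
Your proposal is correct and follows essentially the same route as the paper: project the identity of \autoref{th:main} down to $\G(\Spec(k))=\Zz$, then use \autoref{lemm:chi} on each $Z_i$ together with the divisibility $n_X \mid n_{Z_i}$ (your push-forward of the zero-cycle along $f_i$ is exactly the reason for that divisibility). The extra checks you flag (projectivity of $Y$ and the $Z_i$ over $k$) are harmless and implicit in the paper's one-line argument.
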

\begin{proof}
Project the formula of \autoref{th:main} to $\G(\Spec(k))=\Zz$. This gives
\[
\chi(\Oc_Y)=\deg f \cdot \chi(\Oc_X) + \sum_i n_i \cdot \chi(\Oc_{Z_i}).
\]
Note that $n_X |n_{Z_i}$. Since every $Z_i$ is smooth and of dimension $<\dim X$, \autoref{lemm:chi} gives the result.
\end{proof}

Let $X,Y$ be projective integral varieties. A \emph{correspondence} $Y \leadsto X$ is an element $\gamma \in \CH(Y \times X)$. The \emph{multiplicity} $\mult \gamma$ is the image of $\gamma$ under the map
\[
\CH(Y \times X) \to \CH(k(Y) \times_k X) \to \CH(k(Y))=\Zz.
\]
The \emph{transpose} ${}^t \gamma$ of $\gamma$ is the correspondence $X \leadsto Y$ corresponding to the image of $\gamma$ under the morphism exchanging factors.

When $\gamma=[\Gamma]$ for some integral closed subvariety $\Gamma$ of $Y \times X$, then $\mult \gamma$ can be non-zero only if $\dim \Gamma=\dim Y$. In this case, $\mult \gamma$ coincides with the integer $\deg (\Gamma \to Y)$ of \autoref{def:deg}.

\begin{lemma}
\label{lemm:corr}
Let $\gamma \colon Y \leadsto  X$ be a correspondence between projective integral varieties, with $Y$ smooth. Then $n_X | \mult \gamma \cdot n_Y$.
\end{lemma}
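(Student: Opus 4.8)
The plan is to construct, on $X$, a zero-cycle of degree $\mult\gamma\cdot n_Y$; since $\deg\CH(X)=n_X\cdot\Zz$, the existence of such a cycle is precisely the assertion $n_X\mid\mult\gamma\cdot n_Y$.

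I would begin with a reduction. Divisibility by $n_X$ is stable under $\Zz$-linear combinations, and $\mult$ is additive in $\gamma$, so — as $\CH(Y\times X)$ is generated by classes of integral closed subvarieties — it suffices to treat $\gamma=[\Gamma]$ with $\Gamma\subseteq Y\times X$ an integral closed subvariety. If $\mult\gamma=0$ there is nothing to prove, so I may assume $\dim\Gamma=\dim Y$ and that the first projection $p\colon\Gamma\to Y$ is dominant, hence generically finite of some degree $m\geq 1$; then $\mult\gamma=m$, by \autoref{def:deg} and the description of the multiplicity recalled just before the statement. Write $\mathrm{pr}_Y$ and $\mathrm{pr}_X$ for the two projections out of $Y\times X$; both are projective, and $p=\mathrm{pr}_Y\circ i$, where $i\colon\Gamma\hookrightarrow Y\times X$ is the inclusion.

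The crux is the smoothness of $Y$. Choose a zero-cycle $z$ on $Y$ with $\deg z=n_Y$. Since $Y$ is smooth, the refined Gysin homomorphism attached to the regularly embedded diagonal of $Y$ makes $\CH(Y\times X)$ into a module over the intersection ring $\CH(Y)$, and this action satisfies the projection formula along $\mathrm{pr}_Y$ (see Fulton, \emph{Intersection Theory}, Chapter~8). Viewing $z$ as an element of $\CH(Y)$ of codimension $\dim Y$ and $[\Gamma]$ as an element of $\CH(Y\times X)$ of dimension $\dim Y$, the product $z\cdot[\Gamma]$ is a zero-cycle on $Y\times X$, and the projection formula gives
\[
(\mathrm{pr}_Y)_*(z\cdot[\Gamma])=z\cdot(\mathrm{pr}_Y)_*[\Gamma]=z\cdot(m\,[Y])=m\cdot z,
\]
because $(\mathrm{pr}_Y)_*[\Gamma]=p_*[\Gamma]=m\,[Y]$. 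As the degree of a zero-cycle is unchanged under proper push-forward to $\Spec k$, this shows $\deg(z\cdot[\Gamma])=m\cdot n_Y$, and therefore $(\mathrm{pr}_X)_*(z\cdot[\Gamma])$ is a zero-cycle on $X$ of degree $m\cdot n_Y=\mult\gamma\cdot n_Y$, which finishes the argument.

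The only delicate ingredient is the $\CH(Y)$-module structure on $\CH(Y\times X)$: because $X$ is not assumed smooth, $Y\times X$ need not be smooth either, so $z\cdot[\Gamma]$ cannot be formed by an honest intersection product on $Y\times X$ and must instead be obtained from the Gysin map of the diagonal of the \emph{smooth} variety $Y$ (equivalently, from the action of correspondences with smooth source on Chow groups). Once this is set up, everything else is routine bookkeeping with dimensions and the projection formula, and I expect no further obstacle.
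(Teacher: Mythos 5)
Your proof is correct and follows essentially the same route as the paper: after the same reduction to an integral $\Gamma$ of dimension $\dim Y$, both arguments exploit the smoothness of $Y$ via the refined Gysin map of its diagonal (the $\CH(Y)$-module structure on $\CH(Y\times X)$) together with the projection formula along $\mathrm{pr}_Y$, the identity $(\mathrm{pr}_Y)_*[\Gamma]=\mult\gamma\cdot[Y]$, and invariance of degree under proper push-forward. The paper merely packages this as a commutative diagram showing that $\mult\gamma\cdot y_*$ factors through $x_*$, whereas you evaluate the same composite on a chosen zero-cycle of degree $n_Y$.
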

\begin{proof}
The map $\mult \colon \CH(Y \times X)\to \Zz$ is linear and vanishes on cycles of dimension $\neq \dim Y$, hence we can assume that $\gamma=i_*[\Gamma]$, where $i \colon \Gamma \hookrightarrow Y \times X$ is an integral closed subvariety of dimension $\dim Y$. We have a diagram
\[ \xymatrix{
\CH(Y \times \Gamma) \ar[rr]^{(\id_Y \times i)_*} && \CH(Y \times Y \times X) \ar[d]^{(\id_{Y\times Y} \times x)_*} \ar[rr]^{(\Delta\times \id_X)^*}&& \CH(Y\times X) \ar[r]^{(y \times \id_X)_*}\ar[d]^{(\id_Y \times x)_*}& \CH(X)\ar[d]^{x_*} \\ 
\CH(Y) \ar[u]_{(\id_Y \times \gamma)^*} \ar[rr]_{\mult \gamma \cdot(\id_Y \times y)^*}&& \CH(Y \times Y)  \ar[rr]_{\Delta^*}&& \CH(Y) \ar[r]_{y_*}& \CH(\Spec(k))
}\]
where $\Delta \colon Y \hookrightarrow Y \times Y$ is the diagonal embedding, and $x \colon X \to \Spec(k)$, $y \colon Y \to \Spec(k)$, $\gamma \colon \Gamma \to \Spec(k)$ the structural morphisms. The square on the left is commutative because the push-forward along the projective morphism $\Gamma \to Y$ sends $[\Gamma]$ to $\mult \gamma\cdot [Y]$. Commutativity of the other squares is clear. We see that the bottom composite is $\mult \gamma\cdot y_*$ and factors through $x_*$.
\end{proof}

\begin{theorem}
\label{th:chi}
Let $\gamma\colon Y \leadsto X$ be a correspondence between smooth, projective, connected varieties of the same dimension $d$. Then we have
\[
\mult \gamma \cdot \tau_{d-1} \cdot \chi(\Oc_Y)=\mult {}^t \gamma \cdot \tau_{d-1} \cdot \chi(\Oc_X) \mod \pgcd(n_Y,n_X).
\]
\end{theorem}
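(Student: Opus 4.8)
The plan is to reduce to the case of an integral correspondence by linearity, and then to apply \autoref{prop:deg} to the two projections of its supporting subvariety.

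First observe that both sides of the claimed congruence, together with the modulus, behave well under such a reduction: $\mult$ and $\gamma\mapsto{}^t\gamma$ are homomorphisms of Chow groups, the quantities $\tau_{d-1}\chi(\Oc_Y)$ and $\tau_{d-1}\chi(\Oc_X)$ are fixed integers, and $\pgcd(n_Y,n_X)$ does not involve $\gamma$. Hence both sides are $\Zz$-linear functions of $\gamma$, and since $\CH(Y\times X)$ is generated by the classes of integral closed subvarieties, it suffices to prove the statement when $\gamma=[\Gamma]$ for $\Gamma\subseteq Y\times X$ integral and closed.

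Let $p\colon\Gamma\to Y$ and $q\colon\Gamma\to X$ denote the two projections; both are projective morphisms, as $\Gamma$ is closed in the projective variety $Y\times X$. If $\dim\Gamma\neq d$ then $\mult\gamma=\mult{}^t\gamma=0$ and there is nothing to prove. If $\dim\Gamma=d$ then, by the remark preceding \autoref{lemm:corr}, $\mult\gamma=\deg p$ and $\mult{}^t\gamma=\deg q$ in the sense of \autoref{def:deg} (both vanishing when the corresponding projection is not dominant). Applying \autoref{prop:deg} to $p$ --- legitimate since its target $Y$ is smooth, projective and connected, even though $\Gamma$ itself may be singular --- yields
\[
\tau_{d-1}\cdot\chi(\Oc_\Gamma)=\deg p\cdot\tau_{d-1}\cdot\chi(\Oc_Y)\mod n_Y,
\]
and the same proposition applied to $q$ yields the analogous congruence modulo $n_X$, with $Y$ and $p$ replaced by $X$ and $q$. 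Reducing both congruences modulo $\pgcd(n_Y,n_X)$ and subtracting cancels the common term $\tau_{d-1}\chi(\Oc_\Gamma)$ and leaves exactly the asserted congruence.

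I do not expect a serious obstacle: the real content --- the Grothendieck-Riemann-Roch input and the control of the auxiliary cycles --- has already been absorbed into \autoref{prop:deg} and \autoref{lemm:chi}. The points that need a little care are purely of a bookkeeping nature: that the linearity reduction is valid because the modulus is independent of $\gamma$; that \autoref{prop:deg} imposes no smoothness hypothesis on the source and so applies to the possibly singular $\Gamma$; and that the identification of $\mult\gamma$ with $\deg p$ must be invoked in the form that stays correct when $\Gamma$ has the wrong dimension or when the projection fails to be dominant.
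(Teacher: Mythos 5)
Your proof is correct and follows exactly the paper's route: reduce by linearity of $\mult$ (and of transposition) to the case $\gamma=[\Gamma]$ with $\Gamma$ integral of dimension $d$, then apply \autoref{prop:deg} to the two projections $\Gamma\to Y$ and $\Gamma\to X$ (noting that the proposition places no smoothness hypothesis on the source) and compare modulo $\pgcd(n_Y,n_X)$. The paper compresses all of this into two sentences; your version merely makes the bookkeeping explicit.
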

\begin{proof}
We can assume as above that $\gamma$ is represented by an integral $d$-dimensional closed subvariety $\Gamma$ of $Y\times X$. The result then follows from the application of \autoref{prop:deg} to the projective morphisms $\Gamma \to X$ and $\Gamma \to Y$.
\end{proof}

Let $f\colon Y \dashrightarrow X$ be a rational map of integral projective varieties. The closure of its graph defines a correspondence $\gamma_f$ of multiplicity one. We define the integer $\deg f$ as $\mult {}^t \gamma_f$. This is compatible with \autoref{def:deg}. 

Combining \autoref{th:chi} and \autoref{lemm:corr}, we obtain the following generalization of \cite[Theorem~5.7]{Zai-09}.

\begin{corollary}
\label{cor:rat}
Let $f \colon Y  \dashrightarrow X$ be a rational map of projective, smooth, connected varieties of the same dimension $d$. Then we have
\[
\tau_{d-1} \cdot \chi(\Oc_Y)=\deg f \cdot \tau_{d-1} \cdot \chi(\Oc_X) \mod n_X.
\]
\end{corollary}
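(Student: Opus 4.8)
The plan is to feed the graph correspondence of $f$ into \autoref{th:chi} and then sharpen the modulus by invoking \autoref{lemm:corr}. First I would set $\gamma = \gamma_f \colon Y \leadsto X$, the correspondence given by the closure of the graph of $f$; since $X$ and $Y$ are connected smooth varieties they are in particular integral, so this makes sense, and by construction $\mult \gamma = 1$ while $\deg f = \mult {}^t\gamma$ by definition (compatibly, as noted, with \autoref{def:deg}).

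Next I would record the divisibility that makes the two moduli coincide. Applying \autoref{lemm:corr} to $\gamma \colon Y \leadsto X$, using that $Y$ is smooth, gives $n_X \mid \mult\gamma \cdot n_Y = n_Y$, whence $\pgcd(n_Y, n_X) = n_X$. Then I would apply \autoref{th:chi} to $\gamma$, which yields
\[
\mult\gamma \cdot \tau_{d-1} \cdot \chi(\Oc_Y) = \mult {}^t\gamma \cdot \tau_{d-1} \cdot \chi(\Oc_X) \mod \pgcd(n_Y, n_X),
\]
and substituting $\mult\gamma = 1$, $\mult {}^t\gamma = \deg f$, and $\pgcd(n_Y, n_X) = n_X$ produces precisely the asserted congruence.

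I do not expect a genuine obstacle: the statement is a formal combination of the two cited results. The one point deserving a moment's attention is that \autoref{th:chi} only delivers a congruence modulo $\pgcd(n_Y, n_X)$, which divides $n_X$ and hence is a priori weaker than what is claimed; the extra input needed to upgrade it is exactly the relation $n_X \mid n_Y$, and this in turn relies on the graph correspondence having multiplicity one, so that \autoref{lemm:corr} applies with $\mult\gamma = 1$. Everything else is bookkeeping.
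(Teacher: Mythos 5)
Your proposal is correct and matches the paper's argument exactly: the paper likewise forms the graph correspondence $\gamma_f$ of multiplicity one, applies \autoref{th:chi}, and uses \autoref{lemm:corr} (with $\mult\gamma_f=1$ and $Y$ smooth) to conclude $n_X \mid n_Y$, so that the modulus $\pgcd(n_Y,n_X)$ becomes $n_X$. No gaps; the one subtlety you flag (upgrading the modulus) is precisely the role the paper assigns to \autoref{lemm:corr}.
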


\section{Remarks and consequences}
\subsection{Incompressibility}
Using \autoref{cor:rat}, one can generalize the statements of \cite[6.2,\ldots,6.6]{Zai-09} to base fields of arbitrary characteristic.

\subsection{Resolution of singularities}
When resolution of singularities is available, one can obtain \autoref{prop:main} or \autoref{lemm:chi}, and therefore \autoref{prop:deg} for singular $X$. One can thus remove the smoothness assumptions in \autoref{th:chi}.

When the dimension of $X$ is $<p(p-1)$, where $p$ is the characteristic of the base field, one can also use \cite{firstst} to obtain the same result, over any field.

\subsection{Perfect fields of positive characteristic}
A statement of Rost (\cite[Corollary~1]{Ros-On-08}) says that for any projective variety $X$ over a perfect field of positive characteristic $p$, one has
\[
v_p(n_X) \leq v_p(\chi(\Oc_X)).
\]

It follows that the $p$-primary content of \autoref{th:chi} is empty when the base field is perfect.

\subsection{Generalized degree formula for periodic multiplicative theories}
By \cite[Theorem~4.2.10]{LM-Al-07}, we know that $K$-theory is the universal oriented weak cohomology theory with periodic multiplicative formal group law. Therefore \autoref{th:main} implies
\begin{proposition}
Let $A$ be a oriented weak cohomology theory (\cite[Definition~4.1.13]{LM-Al-07}) over an arbitrary field $k$. Assume that the formal group law of $A$ is $F(x,y)=x+y-\alpha \cdot xy$, for some invertible element $\alpha \in A(\Spec(k))$. Then for any projective morphism of smooth connected varieties $f\colon Y \to X$, we have in $A(X)$
\[
\alpha^{-\dim Y} \cdot f_*(1_Y)=\deg f \cdot \alpha^{-\dim X} \cdot 1_X + \sum_i n_i \cdot \alpha^{1-\dim X} \cdot (f_i)_*(1_{Z_i}).
\]
for some smooth varieties $Z_i$ of dimension $\dim X-1$, projective morphisms $f_i \colon Z_i \to X$, and integers $n_i$. 
\end{proposition}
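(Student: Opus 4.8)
The plan is to deduce this from \autoref{th:main} through the universal property of $K$-theory among periodic multiplicative oriented theories. Let $K$ denote the theory $\K(-)[t,t^{-1}]$ already used in the proof of \autoref{lemm:fisrtc}; one computes from \eqref{d} that its formal group law is $F(u,v)=u+v-t\cdot uv$. By \cite[Theorem~4.2.10]{LM-Al-07} the theory $K$ is the universal oriented weak cohomology theory carrying a formal group law of this shape, so the hypothesis on $A$ provides a unique morphism of oriented weak cohomology theories $\vartheta\colon K\to A$ with $\vartheta(t)=\alpha$; consequently $\vartheta(t^{-1})=\alpha^{-1}$ and $\vartheta(1)=1$.

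The first step is to recast \autoref{th:main} as an identity in $K(X)$. By the push-forward formula \eqref{eq:e}, and using the isomorphism $\K(X)\cong\G(X)$ coming from regularity of $X$, the class $f^K_*(1_Y)\in K(X)$ equals $t^{\dim Y-\dim X}\cdot f_*[\Oc_Y]$, while $(f_i)^K_*(1_{Z_i})=t^{-1}\cdot(f_i)_*[\Oc_{Z_i}]$ because $\dim Z_i=\dim X-1$ (if some $Z_i$ is disconnected one decomposes it into its connected components, each of pure dimension $\dim X-1$). Multiplying the equality of \autoref{th:main} by $t^{-\dim X}$ and regrouping powers of $t$ then yields, in $K(X)$,
\[
t^{-\dim Y}\cdot f^K_*(1_Y)=\deg f\cdot t^{-\dim X}\cdot 1_X+\sum_i n_i\cdot t^{1-\dim X}\cdot (f_i)^K_*(1_{Z_i}).
\]

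To finish I would apply $\vartheta$ to this equation. Since $\vartheta$ is a morphism of oriented weak cohomology theories it commutes with the projective push-forwards $f^K_*$ and $(f_i)^K_*$, and since $\vartheta(t)=\alpha$, $\vartheta(1_Y)=1_Y$ and $\vartheta(1_{Z_i})=1_{Z_i}$, this produces exactly the asserted formula in $A(X)$, with the same varieties $Z_i$, morphisms $f_i$ and integers $n_i$ as those furnished by \autoref{th:main}. I do not anticipate any genuine difficulty: the only point needing care is the bookkeeping of the degree shifts supplied by \eqref{eq:e}, all the substantive content being already contained in \autoref{th:main} and in the universal property \cite[Theorem~4.2.10]{LM-Al-07}.
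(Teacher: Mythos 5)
Your proposal is correct and follows exactly the route the paper intends: the paper states this proposition with only the one-line remark that $K$-theory is the universal oriented weak cohomology theory with periodic multiplicative formal group law (\cite[Theorem~4.2.10]{LM-Al-07}), so that \autoref{th:main} implies the result. Your write-up simply fills in the details of that deduction (the identification of the formal group law of $\K(-)[t,t^{-1}]$, the degree shifts from \eqref{eq:e}, and the application of the classifying morphism $\vartheta$), and the bookkeeping is accurate.
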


In particular we obtain that the subgroup of $A(X)$ generated by projective push-forwards of fundamental classes of smooth varieties of arbitrary dimensions (``the image of cobordism'') coincides with the $\Zz[\alpha]$-submodule of $A(X)$ generated by $1_X$ and the projective push-forwards of fundamental classes of smooth varieties of dimensions $<\dim X$.

\subsection{Generalized degree formula for connective $K$-theory}
Connective $K$-theory $\CK_{p,q}$ has been introduced, over any field, in \cite{Cai}. It appears that the degree formula for this theory is equivalent to the degree formula for $K$-theory.

In order to make a precise statement, we use the notations of \cite{Cai}. In addition, for an integral variety $X$, we denote by $[X]$ the element $[\Oc_X]$ considered as an element of $\CK_{\dim X,-\dim X}(X)$. Thus the Bott element $\beta \in \CK_{1,-1}(\Spec(k))$ is $p_*[{\mathbb{P}^1}]$, where $p\colon \mathbb{P}^1\to \Spec(k)$ is the projective line.

\begin{proposition}
Let $f\colon Y \to X$ be a projective morphism of integral varieties, with $X$ smooth. Assume that $c=\dim X -\dim Y$ is $\leq 0$. Then we have in $\CK_{\dim Y,-\dim Y}(X)$
\[
f_*[Y]=\deg f \cdot \beta^{-c} \cdot [X] + \sum_i n_i \cdot \beta^{1-c} \cdot (f_i)_*[Z_i].
\]
for some smooth varieties $Z_i$ of dimension $\dim X-1$, projective morphisms $f_i \colon Z_i \to X$, and integers $n_i$. 
\end{proposition}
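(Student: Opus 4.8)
The plan is to deduce the formula from its image under inverting the Bott element $\beta$, which is \autoref{th:main}, and to recover the missing information from the fact that connective $K$-theory coincides with $G$-theory in homological degrees $\ge \dim X$. First I would check that the asserted equality is well-posed. Since $\beta\in\CK_{1,-1}(\Spec k)$ and $c\le 0$, the elements $\beta^{-c}$ and $\beta^{1-c}$ are genuine classes in connective $K$-theory, and using $[X]\in\CK_{\dim X,-\dim X}(X)$, $[Z_i]\in\CK_{\dim X-1,-(\dim X-1)}(Z_i)$, the fact that projective push-forwards and multiplication by powers of $\beta$ shift bidegrees in the obvious way, and the relations $-c=\dim Y-\dim X$ and $1-c=\dim Y-(\dim X-1)$, one checks that each of $f_*[Y]$, $\beta^{-c}\cdot[X]$ and $\beta^{1-c}\cdot(f_i)_*[Z_i]$ lies in $\CK_{\dim Y,-\dim Y}(X)$ --- this is the only point where $c\le 0$ is used. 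So it suffices to show that
\[
w=f_*[Y]-\deg f\cdot\beta^{-c}\cdot[X]-\sum_i n_i\cdot\beta^{1-c}\cdot(f_i)_*[Z_i]\in\CK_{\dim Y,-\dim Y}(X)
\]
vanishes.

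Next I would invert $\beta$. Localisation at $\beta$ is a bidegree-preserving natural transformation $\CK_{*,*}(-)\to\CK_{*,*}(-)[\beta^{-1}]$ whose target is a graded form of $G$-theory; it commutes with projective push-forwards, sends $\beta$ to $\beta$, and carries a fundamental class $[Z]$ to $[\Oc_Z]\cdot\beta^{\dim Z}$. Applying it and using $\dim X-c=\dim Y=(\dim X-1)+(1-c)$, the three terms of $w$ map respectively to $f_*[\Oc_Y]\cdot\beta^{\dim Y}$, $\deg f\cdot[\Oc_X]\cdot\beta^{\dim Y}$ and $n_i\cdot(f_i)_*[\Oc_{Z_i}]\cdot\beta^{\dim Y}$, so the image of $w$ is $\beta^{\dim Y}$ times the difference of the two sides of
\[
f_*[\Oc_Y]=\deg f\cdot[\Oc_X]+\sum_i n_i\cdot(f_i)_*[\Oc_{Z_i}]
\]
in $\G(X)=\K(X)$ (here $X$ is smooth). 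By \autoref{th:main} this difference is zero, so $w$ dies in $\CK_{\dim Y,-\dim Y}(X)[\beta^{-1}]$, i.e.\ $\beta^N\cdot w=0$ for some $N\ge 0$.

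Finally I would remove this power of $\beta$. Multiplication by $\beta$ on $\CK_{p,-p}(X)$ is injective for $p\ge\dim X$: by the exact sequence of \cite{Cai} relating connective $K$-theory to the Chow groups, the kernel of this map is a quotient of a Chow group of $X$ in dimension $>\dim X$, hence zero. Since $\dim Y\ge\dim X$, all classes $\beta^j\cdot w$ lie in groups $\CK_{p,-p}(X)$ with $p\ge\dim X$, so $\beta^N\cdot w=0$ forces $w=0$ by descending induction on $N$; this proves the proposition.

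The step I expect to be the real obstacle is not any single computation but assembling the structural input from \cite{Cai}: the precise grading conventions for $\CK_{*,*}$, the behaviour of the comparison maps to $G$-theory and to the Chow groups on fundamental classes and push-forwards, and the stabilisation of multiplication by $\beta$ above $\dim X$. Once these are fixed, one must verify --- as sketched above --- that the powers $\beta^{-c}$ and $\beta^{1-c}$ written in the statement are exactly the ones forced by the identity $c=\dim X-\dim Y$, so that the image of $w$ under localisation is literally $\beta^{\dim Y}$ times the relation of \autoref{th:main}.
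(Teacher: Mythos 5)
Your proposal is correct and is essentially the paper's argument in unpacked form: the paper simply observes that, since $\dim Y \geq \dim X$, the natural map $\CK_{\dim Y,-\dim Y}(X) \to \G(X)$ is an isomorphism and carries the stated formula to that of \autoref{th:main}. Your two-step route (localisation at $\beta$ detects the relation via \autoref{th:main}, and multiplication by $\beta$ is injective on $\CK_{p,-p}(X)$ for $p \geq \dim X$) is precisely a proof of that isomorphism's injectivity plus the same comparison of formulas, so the key input from \cite{Cai} and the reduction to \autoref{th:main} coincide with the paper's.
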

\begin{proof}
It follows from the construction of $\CK$-groups, and from the inequality $\dim Y \geq \dim X$, that the natural map $\CK_{\dim Y,-\dim Y}(X) \to \G(X)$ is an isomorphism. This maps sends the formula of the proposition to the formula of \autoref{th:main}.
\end{proof}

\bibliographystyle{alpha}

\end{document}